\DeclareMathAlphabet{\mathpzc}{OT1}{pzc}{m}{it}
\newcommand{\vbl}{-}
\newcommand{\tn}{\otimes}           
\newcommand{\mathbold}{\bf}
\newcommand{\mc}[1]{{\mathcal {#1}}}
\newcommand{\longmap}{{\,\longrightarrow\,}}
\newcommand{\Hom}{{\mathrm{Hom}}}
\newcommand{\Map}{{\mathrm{Map}}}
\newcommand{\Gal}{{\mathrm{Gal}}}
\newcommand{\bF}{{\mathbold F}}
\newcommand{\bQ}{{\mathbold Q}}
\newcommand{\bZ}{{\mathbold Z}}
\newcommand{\bN}{{\mathbold N}}
\newcommand{\p}{\mathfrak{p}}
\newcommand{\comment}[1]{}
\renewcommand{\leq}{\leqslant}
\renewcommand{\geq}{\geqslant}
\newcommand{\sub}{\subseteq}
\newcommand{\zhat}{\hat{\bZ}}
\newcommand{\bm}[1]{{(\bZ/{#1}\bZ)}^{\circ}}
\newcommand{\bg}[1]{{(\bZ/{#1}\bZ)}^{*}}
\newcommand{\zpos}{\bN'}
\newcommand{\dvd}{\mid}
\newtheoremstyle{mythm}{}{}%
  {\itshape}
  {}
  {\bfseries}
  {}
  { }
  {\thmnumber{#2.\hspace{1.5mm}}\thmname{#1}\thmnote{ #3}.}
\newtheoremstyle{myrmk}{}{}%
  {}
  {}
  {}
  {}
  { }
  {{\bfseries\thmnumber{#2.\hspace{1.5mm}}}{\itshape\thmname{#1}}\thmnote{ #3}.}
\numberwithin{equation}{subsection}
\theoremstyle{mythm}
\newtheorem{thm}[subsection]{Theorem}
\newtheorem{prop}[subsection]{Proposition}
\newtheorem{lemma}[subsection]{Lemma}
\newtheorem{cor}[subsection]{Corollary}
\theoremstyle{myrmk}
\theoremstyle{plain}
\newtheorem*{thm*}{Theorem}
\newtheorem*{prop*}{Proposition}
\newtheorem*{cor*}{Corollary}
\newtheorem*{conj*}{Conjecture}
\theoremstyle{definition}
\def\@seccntformat#1{\@ifundefined{#1@cntformat}%
{\csname the#1\endcsname\quad}
{\csname #1@cntformat\endcsname}
}
\def\section@cntformat{\thesection.\enspace}
\def\subsection@cntformat{\thesubsection.}
\newcommand\mnote[1]{}
\newcommand\lb[1]{\label{#1}\mnote{#1}}
\newcounter{hour}\newcounter{minute}
\newcommand{\printtime}{\setcounter{hour}{\time/60}%
        \setcounter{minute}{\time-\value{hour}*60}%
        \ifthenelse{\value{hour}<10}{0}{}\thehour:%
        \ifthenelse{\value{minute}<10}{0}{}\theminute}
\begin{document}

\title{Galois theory and integral models of $\Lambda$-rings}
\author[J.~Borger, B.~de~Smit]{James Borger, Bart de Smit}
\address{James Borger\\
Mathematical Sciences Institute\\
Building 27\\
Australian National University\\
ACT 0200\\
Australia}

\address{Bart de Smit\\
Mathematisch Instituut\\
Universiteit Leiden\\
Postbus 9512\\
2300 RA Leiden\\
The Netherlands 
}
\date{\today. \printtime}
\email{borger@maths.anu.edu.au, desmit@math.leidenuniv.nl}
\thanks{{\em Mathematics Subject Classification:} 13K05 (primary); 11R37, 19L20, 16W99 
(secondary)}

\begin{abstract}
We show that any $\Lambda$-ring, in the sense of Riemann--Roch theory, which is 
finite \'etale
over the rational numbers and has an integral model as a $\Lambda$-ring is 
contained in a product of cyclotomic fields.  In fact, we show that the category of 
them is described in a Galois-theoretic way in terms of the monoid of pro-finite 
integers under multiplication and the cyclotomic character.  We also study the 
maximality of these integral models and give a more precise, integral version of 
the result above.
These results reveal an interesting relation between $\Lambda$-rings and
class field theory.

\end{abstract}

\maketitle
\section*{Introduction}

According to the most common definition, a $\Lambda$-ring structure
on a commutative ring $R$ is a
sequence of set maps $\lambda_1,\lambda_2,\dots$ from $R$ to itself
that satisfy certain complex implicitly stated axioms.
This notion was introduced by Gro\-then\-dieck~\cite{Grothendieck:Chern},
under the name special $\lambda$-ring, to give an
abstract setting for studying the structure on Gro\-then\-dieck
groups inherited from exterior power operations; and as far as we
are aware, with just one exception~\cite{Clauwens:Lambda}, $\Lambda$-rings
have been studied in the literature for this purpose only.

However, it seems that the study of abstract
$\Lambda$-rings---those having no apparent relation to
$K$-theory---will have something to say about number theory.
One example of such a relationship
is the likely existence of strong arithmetic
restrictions on the complexity
of finitely generated rings that admit a $\Lambda$-ring structure.
The primary purpose of this paper is to investigate this issue
in the zero-dimensional case.  Precisely,
what finite
\'etale $\Lambda$-rings over $\bQ$ are of the form $\bQ\tn A$, where $A$
is a $\Lambda$-ring that is finite flat over $\bZ$?  


We will consider $\Lambda$-actions only on rings whose underlying
abelian group is torsion free, and giving a
$\Lambda$-action on such a ring $R$
is the same as giving commuting ring endomorphisms
$\psi_p\colon R\to R$, one for each prime $p$,
lifting the Frobenius map modulo $p$---that is, such that
$\psi_p(x)-x^p\in pR$ for all $x\in R$.
(The equivalence of this with Grothendieck's original
definition is proved in Wilkerson~\cite{Wilkerson}.)
An example that will be important here is
$\bZ[\mu_r]=\bZ[z]/(z^r-1)$, where $r$ is a positive integer and
$\psi_p$ sends $z$ to $z^p$.
A morphism of torsion-free $\Lambda$-rings
is the same as a ring map $f$ that satisfies
$f\circ\psi_p = \psi_p\circ f$ for all
primes $p$.

Note that if $R$ is a $\bQ$-algebra,
the congruence conditions in the definition 
above disappear.
Also, Galois theory as interpreted by Grothendieck gives 
an anti-equivalence between the category of
finite \'etale $\bQ$-algebras and the category of finite
discrete sets equipped
with a continuous action of the absolute Galois group $G_{\bQ}$ with
respect to a fixed algebraic closure $\bar{\bQ}$.
Combining these two remarks, we see that the category of
$\Lambda$-rings that are finite \'etale over $\bQ$ is nothing more
than the category of finite discrete sets equipped with a continuous
action of the monoid $G_{\bQ}\times\zpos$, where $\zpos$ is the 
monoid $\{1,2,\dots\}$ under multiplication with the discrete topology.
This is because $\zpos$ is freely generated as a commutative 
monoid by the prime numbers.

It is not always true, however, that such a $\Lambda$-ring $K$ has an
integral $\Lambda$-model, by which we mean a sub-$\Lambda$-ring $A$,
finite over $\bZ$, such that $\bQ\tn A = K$.
In order to formulate exactly when this happens, we write $\zhat^{\circ}$ for the
set of profinite integers viewed as a topological monoid under multiplication,
and we consider the continuous monoid map
\[
G_{\bQ}\times\zpos \longmap \zhat^{\circ}
\]
given by the cyclotomic character $G_{\bQ}\to\zhat^*\subset \zhat^{\circ}$ on the first
factor and the natural inclusion on the second. Note that this
map has a dense image.
Now let $K$ be a finite \'etale algebra over $\bQ$, 
and let $S$ be the set of ring maps from $K$ to $\bar{\bQ}$.
Suppose $K$ is endowed with a $\Lambda$-ring structure, so
that we have an induced monoid map 
\[
G_{\bQ}\times\zpos \longmap \Map(S,S),
\]
where $\Map(S,S)$ is the monoid of set maps from $S$ to itself.
With the discrete topology on $\Map(S,S)$ this map is continuous.

\begin{thm}
\lb{thm-A}
The $\Lambda$-ring $K$ has an integral $\Lambda$-model if and only if
the action of $G_{\bQ}\times\zpos$ on $S$ factors (necessarily uniquely) through $\zhat^{\circ}$; in more precise terms, if and only if 
there is a 
continuous monoid map $\zhat^{\circ}\to \Map(S,S)$
so that the diagram
\[
\xymatrix{
G_{\bQ}\times\zpos 
\ar[r] \ar[dr] &
\zhat^{\circ} \ar[d]\\
&\Map(S,S)}
\]
commutes.
\end{thm}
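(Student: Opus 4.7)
The plan is to prove the two directions separately: the ``if'' direction by constructing an integral $\Lambda$-model directly from the given factoring, and the ``only if'' direction by identifying the $\psi_p$'s with Frobenius elements at unramified primes and then invoking Chebotarev density and Kronecker--Weber.

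For the ``if'' direction, the given factoring $\zhat^\circ \to \Map(S,S)$ composed with $G_\bQ \to \zhat^\circ$ lands in $\zhat^*$, so the $G_\bQ$-action on $S$ factors through $(\bZ/N\bZ)^*$ for some $N$ via the cyclotomic character, and each field component of $K$ embeds in $\bQ(\mu_N)$. I would then take $A$ to be the sub-$\bZ$-algebra of $\bar\bZ^S$ consisting of $G_\bQ$-equivariant maps $f \colon S \to \bar\bZ$ satisfying $f(p \cdot s) \equiv f(s)^p \pmod{p \bar\bZ}$ for all $s \in S$ and all primes $p$, with $\psi_p$ defined by $(\psi_p f)(s) = f(p \cdot s)$. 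Closure under $+$ and $\cdot$ follows from the freshman's dream in $\bar\bZ$, the $\Lambda$-ring axioms are essentially built into the definition, and $\bQ \otimes A = K$ is checked by scaling representatives, guided by comparison with the example $\bZ[\mu_N] = \bZ[z]/(z^N-1)$.

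For the ``only if'' direction, assume $A$ is an integral $\Lambda$-model and let $L$ be the Galois closure of (the image of) $K$ in $\bar\bQ$. The first step is that for every prime $p$ unramified in $A$, the $\psi_p$-action on $S$ coincides with that of any Frobenius element $\phi_\mathfrak p \in G_\bQ$ for $\mathfrak p \mid p$ in $\bar\bZ$: extend $s \in S$ to a map $A \to \bar\bZ$ and reduce modulo $\mathfrak p$; since $\psi_p$ descends to the $p$-power Frobenius on $A/pA$, one gets $\overline{s \circ \psi_p} = \overline{\phi_\mathfrak p \circ s}$, and unramifiedness at $p$ lifts this residue equality to equality in $\bar\bZ$. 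Since $\psi_p$ commutes with the $G_\bQ$-action on $S$ (being induced by a $\bQ$-algebra endomorphism of $K$), each such $\phi_\mathfrak p$ is then central in the image of $G_\bQ$ in $\Map(S,S) = \Gal(L/\bQ)$; Chebotarev density forces $\Gal(L/\bQ)$ to be abelian. Kronecker--Weber puts $L$ inside some $\bQ(\mu_N)$, the $G_\bQ$-action on $S$ factors through $(\bZ/N\bZ)^*$ via the cyclotomic character, and for $p \nmid N$ the first step identifies the $\psi_p$-action with multiplication by $p \in (\bZ/N\bZ)^*$, giving the desired factoring on the prime-to-$N$ part.

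The main obstacle I foresee is handling the remaining ramified primes $p \mid N$ in the ``only if'' direction, since $p$ is a zero-divisor in $\bZ/N\bZ$ and no Frobenius in $G_\bQ$ maps onto it. I would tackle this by localizing at $p$: on $A \otimes \bZ_p$ the Frobenius lift $\psi_p$, combined with the abelianness already proved, should pin down the $\psi_p$-action on $S$ as multiplication by $p \in (\bZ/N\bZ)^\circ$, using the explicit ramification of $\bZ_p[\mu_{p^k}]$ (where $\zeta - 1$ is a uniformizer, so residue-level congruences can be promoted to congruences modulo $p$). A backup plan is to embed $A$ $\Lambda$-equivariantly into $\bZ[\mu_M]$ for a suitable $M$ and transfer the factoring from that universal cyclotomic case.
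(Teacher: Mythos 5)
Your ``if'' direction contains a genuine error, not just an unverified step. The ring $A$ you propose---equivariant maps $f\colon S\to\bar\bZ$ with $f(ps)\equiv f(s)^p \pmod{p\bar\bZ}$---is closed under addition, multiplication and the $\psi_p$, but it is in general \emph{not} a $\Lambda$-ring: your congruence only gives $\psi_p(f)-f^p\in p\cdot(\text{maximal order of }K)$, whereas the definition requires $\psi_p(f)-f^p\in pA$. Concretely, take $S=\bZ/4\bZ$ with $\zhat^{\circ}$ acting by multiplication, so $K=\bQ[\mu_4]\cong\bQ\times\bQ\times\bQ(i)$ via $f\mapsto(f(0),f(2),f(1))$. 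The function with $f(0)=f(2)=0$, $f(1)=1+i$, $f(3)=1-i$ satisfies all of your congruences (for odd $\ell$ they are the Frobenius congruences in $\bZ[i]$; for $\ell=2$ one checks $(1\pm i)^2=\pm2i\equiv 0$), so $f\in A$; but $\tfrac12(\psi_2(f)-f^2)=(0,0,-i)$ fails the condition at $s=1$, since $(-i)^2=-1\not\equiv 0\pmod{2\bZ[i]}$. In fact your $A$ has index $4$ in $\bZ\times\bZ\times\bZ[i]$, while the maximal $\Lambda$-order $\bZ[\mu_4]$ has index $8$; so $A$ strictly contains every $\Lambda$-order and cannot be one. (The deeper congruences cut out by $\bZ[\mu_r]$ are exactly what Section~3 of the paper works to control.) The repair is much softer: continuity and finiteness of $S$ make the $\zhat^{\circ}$-action factor through some $\bm{r}$; present $S$ as a quotient of a free $\bm{r}$-set $\coprod_T\bm{r}$, so that $K$ embeds $\Lambda$-equivariantly into $\bQ[\mu_r]^T$, and intersect $K$ with the visibly integral model $\bZ[\mu_r]^T$.

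In the ``only if'' direction, your unramified analysis (identifying $\psi_p$ with Frobenius via reduction mod $\mathfrak p$, then Chebotarev and Kronecker--Weber) is essentially the paper's argument and is fine. But the part you defer---the ramified primes---is where the real content lies, and ``localizing at $p$ should pin down the $\psi_p$-action'' is not an argument; nothing about $A\otimes\bZ_p$ alone determines how $\psi_p$ permutes $S$. What is actually needed is a compatibility over the whole monoid: the paper proves a criterion (its Proposition~\ref{crit}) that an action of $\zhat^*\times\zpos$ factors through $\zhat^{\circ}$ iff almost all primes act invertibly and, for \emph{every} $d\in\zpos$, the subset $dS$ admits a conductor $c_d$ such that every $n$ with $n\,dS=dS$ is prime to $c_d$ and acts as $(n\bmod c_d)$. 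Verifying this uses the observation that $dS$ corresponds to the sub-$\Lambda$-ring $\psi_d(A)$, to which the Chebotarev argument is applied afresh: a prime ramified in $A$ can become unramified in $\psi_d(A)$ for suitable $d$ (e.g.\ $d=p^{a_p}$), and that---not a local computation at $p$---is how ramified primes are tamed. Note also that the modulus $r$ one ends up with must be divisible by $dc_d$ for the relevant $d$ and may exceed the conductor $N$ of the Galois action, so the factoring cannot simply be read off from $\bg{N}$. Finally, your backup plan of embedding $A$ into some $\bZ[\mu_M]$ is circular: that embedding is Corollary~\ref{cor-C}, which is deduced \emph{from} Theorems~\ref{thm-A} and~\ref{thm-B}.
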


It follows that the category of such $\Lambda$-rings is anti-equivalent to
the category of finite discrete sets with a continuous action of
$\zhat^{\circ}$ and that
every such $\Lambda$-ring 
is contained in a product of cyclotomic fields.
It also suggests
there is
an interesting theory of a $\Lambda$-algebraic fundamental monoid,
analogous to that of 
the usual algebraic fundamental group,
but we will leave this for a later date.

Another consequence is that the elements $-1,0\in\zhat^\circ$ give an involution 
$\psi_{-1}$ (complex conjugation) and a idempotent endomorphism $\psi_{0}$ on any 
$K$ with an integral $\Lambda$-model.  In $K$-theory, the dual and rank also give such 
operators, but here they come automatically from the $\Lambda$-ring structure.  
Also observe that any element of the subset $0S\subseteq S$ is Galois invariant and hence 
corresponds to a direct factor $\bQ$ of the algebra $K$.
Therefore $K$ cannot be a field unless $K=\bQ$.

In the first section, we give some basic facts and also show the
sufficiency of the condition in the theorem above.  
In the second section, we
show the necessity.  The proof combines a simple application of the 
Kronecker--Weber theorem and the Chebotarev density theorem with some elementary 
but slightly intricate work on actions of the monoid $\zhat^\circ$.

The third section gives a proof of the following theorem:
\begin{thm}
\label{thm-B}
The $\Lambda$-ring $\bZ[\mu_r]$ is the maximal integral $\Lambda$-model of $\bQ\tn\bZ[\mu_r]$.
\end{thm}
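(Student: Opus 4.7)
Let $A$ be an integral $\Lambda$-model of $\bQ\otimes\bZ[\mu_r]$. Replacing $A$ by the sub-$\bZ$-algebra of $\bQ[\mu_r]$ generated by $A$ and $\bZ[\mu_r]$, which is still a finite $\Lambda$-stable order, we may assume $A\supseteq\bZ[\mu_r]$ and reduce to showing $A=\bZ[\mu_r]$. A trace argument applied to the formula $a_k=\tr(\alpha z^{-k})/r$ (using $\tr(z^k)=r$ if $r\mid k$ and $0$ otherwise) gives $A\subseteq\tfrac{1}{r}\bZ[\mu_r]$, so $A/\bZ[\mu_r]$ is killed by $r$. It remains to check $A\otimes\bZ_p\subseteq\bZ_p[\mu_r]$ for each prime $p$. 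For $p\nmid r$ this is automatic, because $\bZ_p[\mu_r]$ is already the maximal $\bZ_p$-order of $\bQ_p[\mu_r]$.

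For $p\mid r$, write $r=p^a m$ with $\gcd(p,m)=1$. The étale factor $\bZ_p[\mu_m]=\prod_{e\mid m}\bZ_p[\zeta_e]$ is contained in $A\otimes\bZ_p$ and induces an idempotent splitting $A\otimes\bZ_p=\prod_{e\mid m}B_e$, where each $B_e$ is a $\bZ_p[\zeta_e]$-order in $\bQ_p(\zeta_e)[\mu_{p^a}]$ containing $\bZ_p[\zeta_e][\mu_{p^a}]$ and carrying a compatible $\Lambda$-structure (with $\psi_p$ acting as the Frobenius of $\bZ_p[\zeta_e]/\bZ_p$ combined with $z\mapsto z^p$ on the $\mu_{p^a}$-factor). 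The problem thus reduces to showing $B_e=\bZ_p[\zeta_e][\mu_{p^a}]$, i.e.\ the wildly ramified case $r=p^a$ over an enlarged base.

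For the wild case I would argue by induction on $a$. In the base case $a=1$, the trace bound reads $B=p^{-1}L$ with $p\bZ_p[\mu_p]\subseteq L\subseteq\bZ_p[\mu_p]$, and the ring axiom forces $L\cdot L\subseteq pL$. Expanding the Frobenius-lift condition for $\alpha/p\in B$ and substituting $\psi_p(\alpha)=\alpha^p+p\gamma$ yields, after clearing denominators and using that $p^{p-1}-1\in\bZ_p^\times$, the containment $\alpha^p\in p^p\bZ_p[\mu_p]$; a valuation argument in the maximal order $\mathcal O=\bZ_p\times\bZ_p[\zeta_p]$ then forces $\alpha\in p\mathcal O$. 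Thus the image of $L$ modulo $p\bZ_p[\mu_p]$ lies in the one-dimensional $\bF_p$-subspace spanned by the class of $N=1+z+\cdots+z^{p-1}$, giving only two possibilities: $L=p\bZ_p[\mu_p]$ (hence $B=\bZ_p[\mu_p]$) or $L=p\mathcal O$ (hence $B=\mathcal O$). The latter is excluded by the Frobenius-lift condition at the idempotent $e_1=(1,0)\in\mathcal O$: $\psi_p(e_1)-e_1^p=(0,1)$, which is not in $p\mathcal O$ because $1\in\bZ_p[\zeta_p]$ is a unit.

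The inductive step for $a>1$ would study $\psi_p(B)$ as a candidate $\Lambda$-model of $\bQ_p[\mu_{p^{a-1}}]$ (via the identification $\bQ_p[z^p]\simeq\bQ_p[\mu_{p^{a-1}}]$), apply the inductive hypothesis to conclude $\psi_p(B)=\bZ_p[\mu_{p^{a-1}}]$, and then use a lattice analysis at the top layer to force $B=\bZ_p[\mu_{p^a}]$. The main obstacle is showing $\psi_p(B)$ is itself a $\Lambda$-model: while $\psi_q$-stability follows by commutativity of the $\psi$'s, descending the Frobenius-lift condition from $B$ to $\psi_p(B)$ amounts to the $p$-saturation property $pB\cap\psi_p(B)\subseteq p\psi_p(B)$, which is the technical heart of the argument.
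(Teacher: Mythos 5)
Your reduction to the local, totally wild case and your base case $a=1$ are sound, and up to that point you are close to the paper (which also localizes, splits off the tame part as a product of unramified extensions $A/\bZ_p$, and is left with $A[\mu_{p^a}]$). But the inductive step for $a>1$ is a genuine gap, and moreover you have misplaced the difficulty. The step you call the ``technical heart'' --- showing that $\psi_p(B)$ is again a $\Lambda_p$-order --- is in fact automatic: for $x=\psi_p(y)$ with $y\in B$ one has
\[
\psi_p(x)-x^p=\psi_p(\psi_p(y))-\psi_p(y)^p=\psi_p\bigl(\psi_p(y)-y^p\bigr)=\psi_p(p\gamma)=p\,\psi_p(\gamma)\in p\,\psi_p(B),
\]
since $\psi_p$ is a ring homomorphism; the image of a torsion-free $\Lambda_p$-ring under a $\Lambda_p$-equivariant map is a $\Lambda_p$-ring (the paper uses exactly this fact when constructing maximal $\Lambda$-orders). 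No saturation property is needed. The real problem is the step you dismiss as ``a lattice analysis at the top layer.'' Knowing $\psi_p(B)=\bZ_p[\mu_{p^{a-1}}]$ (equivalently, that the projection of $B$ away from the top factor $\bQ_p(\zeta_{p^a})$ lands in $\bZ_p[\mu_{p^{a-1}}]$) together with the fact that the top projection lands in $\bZ_p[\zeta_{p^a}]$ only places $B$ inside $\bZ_p[\mu_{p^{a-1}}]\times\bZ_p[\zeta_{p^a}]$. The quotient of this product by $\bZ_p[\mu_{p^a}]$ is $\bF_p[y]/(y-1)^{p^{a-1}}$, which for $a>1$ is far from simple, so the two-case dichotomy that closed your base case (where the corresponding quotient is one-dimensional) does not recur, and excluding a single bad idempotent no longer suffices.

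This missing step is precisely where the paper's Lemma on $A[\mu_{pq}]$ does its work: it exhibits $A[\mu_{pq}]$ as the fiber product $A[\zeta]\times_{k[y]/(y^q-1)}A[y]/(y^q-1)$ and then runs a bootstrapping valuation argument --- taking the largest $a\leq 1/(p-1)$ with $v(b-f(\zeta))\geq a$ for all $(b,f(y))\in R$ and using the congruence $\psi_p(x)\equiv x^p \bmod pR$ to show $(1+a)/p$ also works, forcing $a=1/(p-1)$ and hence membership in the fiber product. Nothing in your sketch substitutes for this. Incidentally, your trace bound $A\subseteq \frac1r\bZ[\mu_r]$ is true but too weak to help here (for $a\geq 2$ the gap between $\frac{1}{p^a}\bZ_p[\mu_{p^a}]$ and $\bZ_p[\mu_{p^a}]$ is large), and in the splitting over the tame part you should note that the quotients such as $p\mathcal{O}/pA[\mu_p]$ are one-dimensional over the residue field $k$ of the unramified base $A$, not over $\bF_p$; this is harmless only because your orders are $A$-modules. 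To complete the proof you need to supply the inductive gluing argument in full; as it stands the theorem is only established when $r$ is squarefree away from... more precisely, when $p^2\nmid r$ for every prime $p$.
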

Of course, the non-$\Lambda$ version of this statement is false---the usual 
maximal order of $\bQ\tn\bZ[\mu_r]$ is a product of rings of integers in 
cyclotomic fields and strictly contains $\bZ[\mu_r]$, if $r>1$.

A direct consequence of these theorems is:

\begin{cor}
\label{cor-C}
Every $\Lambda$-ring that has finite rank as an abelian group and 
has no non-zero nilpotent elements is a
sub-$\Lambda$-ring of a $\Lambda$-ring 
of the form $\bZ[\mu_r]^n$.
\end{cor}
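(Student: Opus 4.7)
The plan is to combine Theorems~\ref{thm-A} and~\ref{thm-B} via integral closure. Let $A$ satisfy the hypotheses. By the standing convention of the paper, $A$ is torsion-free, so it is a finitely generated free $\bZ$-module. Set $K:=\bQ\tn_\bZ A$; this is a finite-dimensional commutative $\bQ$-algebra, and the hypothesis that $A$ has no non-zero nilpotents passes to $K$, making $K$ a finite \'etale $\Lambda$-algebra over $\bQ$ whose integral $\Lambda$-model is $A$.

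Theorem~\ref{thm-A}, together with the consequence recorded immediately after its statement, then supplies a $\Lambda$-embedding $K\hookrightarrow\prod_{i=1}^m\bQ(\mu_{r_i})$ for some positive integers $r_i$. Setting $r:=\mathrm{lcm}(r_1,\dots,r_m)$, I would compose with the natural inclusions $\bQ(\mu_{r_i})\hookrightarrow\bQ(\mu_r)$---these are $\Lambda$-maps because $\psi_p$ acts as $z\mapsto z^p$ on both sides---to obtain a single $\Lambda$-embedding $\iota\colon K\hookrightarrow\bQ(\mu_r)^m$.

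Since $A$ is finite over $\bZ$, each element of $\iota(A)$ is integral over $\bZ$ and therefore lies in the integral closure of $\bZ$ in $\bQ(\mu_r)^m$, which equals $\bZ[\mu_r]^m$. Any ring endomorphism of $\bQ(\mu_r)^m$ preserves this integral closure, so the $\psi_p$'s restrict to the $\Lambda$-structure on $\bZ[\mu_r]^m$ that Theorem~\ref{thm-B} identifies as the (maximal) integral $\Lambda$-model of $\bQ(\mu_r)^m$, thereby realising $A$ as a sub-$\Lambda$-ring of $\bZ[\mu_r]^m$. I anticipate no serious obstacle: the real content is in Theorems~\ref{thm-A} and~\ref{thm-B}, and the only point to keep in mind is that $\bZ[\mu_r]^m$ is the full ring of integers of $\bQ(\mu_r)^m$ and thus automatically absorbs any finitely generated sub-$\bZ$-algebra.
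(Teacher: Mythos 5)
There is a genuine gap in your final step. You claim that the integral closure of $\bZ$ in $\bQ[\mu_r]^m$ equals $\bZ[\mu_r]^m$ and that $\iota(A)$, being integral over $\bZ$, therefore lands in $\bZ[\mu_r]^m$. This is false for $r>1$: the paper points out immediately after Theorem~\ref{thm-B} that the usual maximal order of $\bQ\tn\bZ[\mu_r]$ is a product of rings of integers of cyclotomic fields and \emph{strictly} contains $\bZ[\mu_r]$. Concretely, for $r=p$ the idempotent $\frac{1}{p}(1+z+\dots+z^{p-1})$ is integral over $\bZ$ but does not lie in $\bZ[z]/(z^p-1)$. So integrality alone does not force $A$ into $\bZ[\mu_r]^m$; if it did, Theorem~\ref{thm-B} would be superfluous, whereas it is exactly the ingredient needed here. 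The correct finish: $A$ is a $\Lambda$-order in $K$, hence contained in the maximal $\Lambda$-order of $K$, which by Proposition~\ref{prop-intersection} equals $K\cap\bigl(\text{maximal $\Lambda$-order of }\bQ[\mu_r]^m\bigr)$; the latter is $K\cap\bZ[\mu_r]^m$ by Theorem~\ref{thm-B} (applied factorwise, the projections being $\Lambda$-maps). It is $\Lambda$-maximality, not integral closedness, that absorbs $A$.

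A secondary problem is the intermediate embedding. You pass through powers of the \emph{field} $\bQ(\zeta_{r_i})$ with ``$\psi_p$ acting as $z\mapsto z^p$,'' but for $p\mid r_i$ the assignment $\zeta_{r_i}\mapsto\zeta_{r_i}^p$ is not an endomorphism of that field (it does not preserve the minimal polynomial $\Phi_{r_i}$); indeed the paper observes after Theorem~\ref{thm-A} that no field other than $\bQ$ can carry a $\Lambda$-structure with an integral model, because $\psi_0$ produces a direct factor $\bQ$. The target must be $\bQ[\mu_r]^m=(\bQ[z]/(z^r-1))^m$, whose $\Lambda$-structure permutes the cyclotomic factors rather than acting on each one. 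The safe construction is the one in the sufficiency proposition of Section~1: Theorem~\ref{thm-A} together with continuity and finiteness of $S$ shows the action factors through $\bm{r}$ for some $r$, and a surjection $\coprod_T\bm{r}\to S$ from a free $\bm{r}$-set yields a $\Lambda$-embedding $K\hookrightarrow\bQ[\mu_r]^T$. Combined with the corrected final step above, this proves the corollary.
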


We do not need to require that the ring be torsion free because any torsion
element in a $\Lambda$-ring is nilpotent, by an easy lemma
attributed to G.\ Segal~\cite[p.\ 295]{Dress}.
We emphasize that while the definition of $\Lambda$-ring that we gave above
does not literally require the ring to be torsion free, 
it is not the correct definition in the absence of this assumption.
In particular, Segal's lemma and the corollary above are false if the naive
definition is used;
for example, take a finite field.  For the definition of $\Lambda$-ring
in the general case, 
see~\cite{Grothendieck:Chern},~\cite{Wilkerson}, or~\cite{Borger-Wieland:PA}.


Finally, many of the questions answered in this paper have analogues over general 
number fields.  There one would use Frobenius lifts modulo prime ideals of the ring 
of integers, and then general class field theory and
the class group come in, as well as the theory of complex
multiplication in particular cases.
Because these analogues of $\Lambda$-rings are not objects of prior
interest, we have not included anything about them here.  But it is clear 
that finite-rank 
$\Lambda$-rings, in this generalized sense or the original, are fundamentally objects of 
class field theory and that 
they offer a slightly different perspective on the subject.
It would be interesting to explore this further.

%

\section{Basics}

The category of $\Lambda$-rings has all limits and colimits, and they
agree, as rings, with those taken in the category of rings.
(E.g.~\cite{Borger-Wieland:PA})
We will only need
to take tensor products, intersections, 
and images of morphisms, and it is quite easy
to show their existence on the subcategory of torsion-free
$\Lambda$-rings using the equivalent definition given in the introduction.
For any ring $R$, let $R[\mu_r]$ denote $R[z]/(z^r-1)$.  Because $R[\mu_r]=R\tn\bZ[z]/(z^r-1)$, the ring $R[\mu_r]$ is naturally a $\Lambda$-ring if $R$ is.

Given a $\Lambda$-ring $R$, it will be convenient to call a
$\Lambda$-ring $K$ equipped with a map $R\to K$ of $\Lambda$-rings an
$R\Lambda$-ring. (Compare~\cite[1.13]{Borger-Wieland:PA}.) 
When we say $K$ is flat, or
\'etale, or so on, we mean as $R$-algebras in the usual sense.
We call a sub-$\Lambda$-ring of a $\bQ\Lambda$-ring a $\Lambda$-order if it is finite over $\bZ$.  We do not require that it have full rank.

\begin{prop}
Let $K$ be a finite \'etale $\bQ\Lambda$-ring.  Then $K$ has a
$\Lambda$-order that contains all others.
\end{prop}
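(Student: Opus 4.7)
The plan is to exhibit the desired maximal $\Lambda$-order as the (necessarily finite) supremum of all $\Lambda$-orders inside the maximal $\bZ$-order of $K$. Write $\sO_K$ for the integral closure of $\bZ$ in $K$. Since $K$ is finite \'etale over $\bQ$, it is a finite product of number fields, so $\sO_K$ is finite over $\bZ$ and in particular is a Noetherian $\bZ$-module. Any $\Lambda$-order $A\subseteq K$ is a subring of $K$ that is finite over $\bZ$, hence consists of elements integral over $\bZ$, so $A\subseteq\sO_K$.

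First I would observe that the collection $\mathcal{F}$ of all $\Lambda$-orders of $K$ is directed under inclusion. Given two $\Lambda$-orders $A,B\in\mathcal{F}$, their compositum $AB$ (the image of the multiplication map $A\tn_{\bZ}B\to K$) is a subring of $K$ that is finite over $\bZ$, as it is contained in $\sO_K$. Because every $\psi_p$ is a ring endomorphism of $K$ stabilizing both $A$ and $B$, it also stabilizes $AB$, so $AB$ is again a sub-$\Lambda$-ring of $K$ and therefore lies in $\mathcal{F}$. Hence $\mathcal{F}$ is directed.

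Next, since $\sO_K$ is a Noetherian $\bZ$-module, every ascending chain of sub-$\bZ$-modules of $\sO_K$ stabilizes, so there exists an element $A_0\in\mathcal{F}$ that is maximal with respect to inclusion. For any other $B\in\mathcal{F}$, the compositum $A_0B$ lies in $\mathcal{F}$ and contains $A_0$, so by maximality $A_0B=A_0$ and hence $B\subseteq A_0$. Thus $A_0$ contains every $\Lambda$-order of $K$, which proves the proposition.

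I do not anticipate any real obstacle: the only points that require a moment's care are that the compositum of two $\Lambda$-orders is again a $\Lambda$-order (here the torsion-free description of a $\Lambda$-structure as a commuting family of Frobenius lifts makes this immediate, since the $\psi_p$ are ring maps of $K$), and that we have a common ambient finite $\bZ$-module $\sO_K$ in which to apply Noetherianity.
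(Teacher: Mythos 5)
Your proof is correct and follows essentially the same route as the paper: both arguments bound every $\Lambda$-order by the maximal order $\sO_K$ and reduce to showing that two $\Lambda$-orders $A,B$ lie in a third, namely the image $AB$ of $A\tn B\to K$ (you make the final Noetherian/maximal-element step explicit where the paper leaves it implicit). The one imprecision is your claim that $AB$ being a sub-$\Lambda$-ring is immediate because the $\psi_p$ stabilize it: since $K$ is a $\bQ$-algebra the Frobenius congruence is vacuous on $K$, so being a $\Lambda$-order requires the extra condition $\psi_p(x)-x^p\in p\cdot AB$ for $x\in AB$, which is not a consequence of stability alone but does follow in one line by writing $x=\sum_i a_ib_i$ and using the congruences $\psi_p(a_i)\equiv a_i^p \bmod pA$, $\psi_p(b_i)\equiv b_i^p\bmod pB$ together with $(\sum_i a_ib_i)^p\equiv\sum_i a_i^pb_i^p\bmod p\,AB$ --- this is exactly what the paper's appeal to $A\tn B$ being the coproduct in the category of $\Lambda$-rings encodes.
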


We call this $\Lambda$-order the maximal $\Lambda$-order of $K$.

\begin{proof}
Because any $\Lambda$-order $A$ is contained in the usual maximal order of $K$,
which is finite over $\bZ$, it
is enough to show that any two $\Lambda$-orders $A$ and $B$ are contained in a
third.  But $A\tn B$ is a $\Lambda$-ring that is finite over $\bZ$.
Since $A\tn B$ is the coproduct in the category of $\Lambda$-rings, the 
map $A\tn B \to K$ coming from the universal property of coproducts
(i.e., $a\tn b\mapsto ab$) is a $\Lambda$-ring map.
Therefore its image is a $\Lambda$-ring that is finite over $\bZ$, is contained in $K$, 
and contains $A$ and $B$.
\end{proof}

\begin{prop}
\lb{prop-intersection}
Let $K\subseteq L$ be an inclusion of finite \'etale
$\bQ\Lambda$-rings.  Let $A\sub K$ and $B\sub L$ be their maximal
$\Lambda$-orders.  Then $A=K\cap B$.
\end{prop}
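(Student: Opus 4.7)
The plan is to prove both inclusions $A\subseteq K\cap B$ and $K\cap B\subseteq A$ directly, using the two defining properties of a maximal $\Lambda$-order: being a $\Lambda$-subring and being finite over $\bZ$.

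First I would handle $A\subseteq K\cap B$. Since $A$ is contained in $K$ and $K$ sits inside $L$, the $\Lambda$-order $A$ is itself a sub-$\Lambda$-ring of $L$ that is finite over $\bZ$, i.e.\ a $\Lambda$-order of $L$. By maximality of $B$ in $L$, we get $A\subseteq B$, and combined with $A\subseteq K$ this gives $A\subseteq K\cap B$.

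For the reverse inclusion $K\cap B\subseteq A$, I would observe that $K\cap B$, computed in $L$, is a $\Lambda$-subring of $K$: by the remark at the start of the section, intersections exist in the category of (torsion-free) $\Lambda$-rings and agree with ring-theoretic intersections, so both the $\Lambda$-structure and the subring structure are inherited. Moreover $K\cap B$ sits inside $B$, which is finite over $\bZ$; since $\bZ$ is Noetherian, $K\cap B$ is finite over $\bZ$ as well. Therefore $K\cap B$ is a $\Lambda$-order of $K$, and by maximality of $A$ in $K$ it is contained in $A$.

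There is no real obstacle here beyond checking that the intersection of two $\Lambda$-subrings in a common $\Lambda$-overring is again a $\Lambda$-subring, which is immediate in the torsion-free setting from the description of a $\Lambda$-structure as a commuting family of Frobenius lifts $\psi_p$: a ring-theoretic intersection of $\psi_p$-stable subrings is automatically $\psi_p$-stable. The whole argument is thus a formal consequence of maximality together with the Noetherian property of $\bZ$.
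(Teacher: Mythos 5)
Your proof is correct and follows essentially the same route as the paper: the paper also observes that $K\cap B$ is a sub-$\Lambda$-ring of $K$ that is finite over $\bZ$, and deduces it is the maximal $\Lambda$-order of $K$ from the maximality of $B$ (which is exactly your inclusion $A\subseteq K\cap B$). Your version merely spells out the two inclusions and the Noetherian justification that the paper leaves implicit.
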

\begin{proof}
The intersection 
$K\cap B$ is on the one hand a sub-$\Lambda$-ring of $K$ and, on the other,
finite over $\bZ$.  It is maximal among such rings because of the
maximality of~$B$.
\end{proof}


We can now prove the sufficiency of the conditions of theorem~\ref{thm-A}.
For any ring $R$, 
let $R^\circ$ denote $R$ itself but viewed only as a monoid under multiplication.
So the group $R^*$ of units is just the group of invertible elements 
of the monoid $R^\circ$.

\begin{prop}
Let $r$ be a positive integer, 
let $S$ be a finite $\bm{r}$-set, and let $K$ be the corresponding finite
\'etale $\bQ\Lambda$-ring.
Then $K$ has an integral $\Lambda$-model.
\end{prop}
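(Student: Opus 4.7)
The plan is to realize $K$ as a sub-$\bQ\Lambda$-ring of $\bQ\tn\bZ[\mu_r]^S$ and then take $A=K\cap\bZ[\mu_r]^S$ as the integral $\Lambda$-model. The key idea is that $\bZ[\mu_r]$, and hence $\bZ[\mu_r]^S$, is an obvious integral $\Lambda$-model of its rationalization, so any sub-$\bQ\Lambda$-ring of $\bQ\tn\bZ[\mu_r]^S$ will inherit one via intersection.

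To make this precise, I would first identify, under the anti-equivalence, the $\bQ\Lambda$-ring corresponding to the $\bm{r}$-set $\bm{r}$ (under its self-action by multiplication) with $\bQ\tn\bZ[\mu_r]$. Indeed, the $\bar\bQ$-points of $\bZ[\mu_r]$ form $\mu_r(\bar\bQ)$, the combined $G_\bQ\times\zpos$-action on this set factors through $\bm{r}$ via the cyclotomic character on $G_\bQ$ and the natural map on $\zpos$, and after choosing a primitive $r$-th root of unity $\zeta$, the map $a\mapsto\zeta^a$ gives a $\bm{r}$-equivariant bijection $\bm{r}\isomap\mu_r(\bar\bQ)$. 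Next, for the given $\bm{r}$-set $S$, the map
\[
\varphi\colon\bigsqcup_{s\in S}\bm{r}\longmap S,\qquad (a,s)\longmapsto a\cdot s
\]
is a surjection of $\bm{r}$-sets, and dualizing through the anti-equivalence produces an injection $K\hookrightarrow\bQ\tn\bZ[\mu_r]^S$ of $\bQ\Lambda$-rings.

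I then set $A=K\cap\bZ[\mu_r]^S$, viewed inside $\bQ\tn\bZ[\mu_r]^S$. Both intersectands are sub-$\Lambda$-rings of the torsion-free $\Lambda$-ring $\bQ\tn\bZ[\mu_r]^S$, so $A$ is a sub-$\Lambda$-ring of $K$; and it is finite over $\bZ$ because $\bZ[\mu_r]^S$ is. Finally, $\bQ\tn A=K$ by the standard lattice argument: any element of $K\sub\bQ\tn\bZ[\mu_r]^S$ can be scaled by a positive integer into $\bZ[\mu_r]^S$, hence into $A$.

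The main obstacle is the identification of $\bZ[\mu_r]$ as the integral $\Lambda$-model attached to $\bm{r}$: one must verify that the $\zpos$-action (via $z\mapsto z^p$) combined with the $G_\bQ$-action (via the cyclotomic character) on $\mu_r(\bar\bQ)$ matches the left-multiplication $\bm{r}$-action under $a\mapsto\zeta^a$. Once this bookkeeping is done, the remaining steps reduce to the formal dictionary between finite \'etale algebras and their spectra together with a standard intersection-with-a-lattice computation.
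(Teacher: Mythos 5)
Your proposal is correct and follows essentially the same route as the paper: the paper also covers $S$ by a free $\bm{r}$-set $\coprod_T\bm{r}$ (with $T=S$ as one allowed choice), embeds $K$ into $\bQ[\mu_r]^T$, and intersects with $\bZ[\mu_r]^T$ to obtain a full-rank $\Lambda$-order. Your additional verification that the $\bm{r}$-set $\bm{r}$ corresponds to $\bQ\tn\bZ[\mu_r]$ is exactly the bookkeeping the paper leaves implicit.
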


\begin{proof}
Take a set $T$ (such as $S$) admitting a surjection 
$\coprod_T\bm{r}\to S$ of $\bm{r}$-sets, the left side denoting
the free $\bm{r}$-set generated by $T$.   Let $L$ be the corresponding finite \'etale $\bQ\Lambda$-ring.
Then $K$ is naturally a sub-$\Lambda$-ring of $L$.  On
the other hand $L$ is $\bQ[\mu_r]^T$ and so has a $\Lambda$-model
$\bZ[\mu_r]^T$.  The intersection of this with $K$ is then
both a $\Lambda$-ring and an order of full rank in $K$.
\end{proof}

\section{Necessary conditions}

Let $K$ be a finite \'etale
$\bQ\Lambda$-ring admitting an integral $\Lambda$-model $A$, 
and let $S=\Hom(K,\bar{\bQ})$
be the corresponding $G_\bQ\times\zpos$-set.

The purpose of this
section is to show that there is an integer $r>0$ such that this
action factors through the map $G_\bQ\times\zpos\to\bm{r}$
given by the cyclotomic character on the first factor and reduction modulo $r$
on the second.

As usual, we say a
prime number $p$ is unramified in $A$ if $A/pA$ has no non-zero nilpotent elements.
A prime is ramified in $A$ if and only if it divides the discriminant of $A$.
Therefore the set of primes that ramify in $A$ is finite and contains
the set of primes that ramify in the usual maximal order of $K$.

\begin{prop}
\lb{prop-frob-auto}
The endomorphism $\psi_p$ of $A$ is an automorphism if and only if $p$
is unramified in $A$.  In this case, $\psi_p$ is the unique lift of the Frobenius endomorphism of $A/pA$.
\end{prop}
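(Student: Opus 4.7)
I would treat the two directions and the uniqueness clause separately.

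For the ``only if'' direction, suppose $\psi_p$ is an automorphism of $A$. Then $\psi_p$ descends to an automorphism $\bar\psi_p$ of $A/pA$, which by definition is just the Frobenius $x\mapsto x^p$. If $A/pA$ contained a nonzero nilpotent $x$ with $x^n=0$, then $\bar\psi_p^k(x)=x^{p^k}=0$ for $p^k\geq n$, contradicting the injectivity of $\bar\psi_p^k$. Hence $A/pA$ is reduced, i.e., $p$ is unramified in $A$.

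For the ``if'' direction I would first work $p$-adically. Unramifiedness means $A\otimes\bZ_p$ is finite \'etale over $\bZ_p$ and $A/pA$ is a product of finite fields, on which Frobenius is an automorphism. Because $A\otimes\bZ_p$ is $p$-adically complete, the infinitesimal lifting property of \'etale morphisms applied iteratively across the thickenings $\bZ_p/p^n\to\bZ_p/p^{n+1}$ yields a bijection
\[
\End_{\bZ_p}(A\otimes\bZ_p)\longisomap\End_{\bF_p}(A/pA).
\]
Applying this bijection to $\bar\psi_p^{-1}$ produces a two-sided inverse to $\psi_p$ on $A\otimes\bZ_p$, so $\psi_p$ is an automorphism there. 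The uniqueness clause then falls out of the same bijection: any two ring endomorphisms of $A$ lifting $\bar\psi_p$ extend $\bZ_p$-linearly to endomorphisms of $A\otimes\bZ_p$ with identical reductions mod $p$, hence coincide on $A\otimes\bZ_p$, and therefore on $A$, which is embedded there because $A$ is torsion-free.

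To upgrade from an automorphism of $A\otimes\bZ_p$ to one of $A$, I would run a determinant computation. Injectivity of $\psi_p$ on $A\otimes\bZ_p$ forces injectivity on $K=A\otimes\bQ$, and an injective $\bQ$-algebra endomorphism of a finite-dimensional \'etale $\bQ$-algebra is automatically surjective, so $\psi_p$ is an automorphism of $K$. Extending scalars to $\bar\bQ$, the resulting $\bar\bQ$-algebra automorphism of $K\otimes\bar\bQ\cong\bar\bQ^{[K:\bQ]}$ must permute the minimal idempotents, so in the idempotent basis it is a permutation matrix and has determinant $\pm 1$. Computing $\det(\psi_p)$ in any $\bZ$-basis of $A$ (which is simultaneously a $\bQ$-basis of $K$) then gives $\det(\psi_p\colon A\to A)=\pm 1$, forcing $\psi_p$ to be an automorphism of $A$.

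The main obstacle is precisely this last step: the $p$-adic \'etale argument sees nothing of $\psi_p$ at primes $\ell\neq p$, so the global structure of $K$ as a finite \'etale $\bQ$-algebra has to be used. The determinant trick accomplishes this by converting surjectivity at every prime into a single computation over $\bar\bQ$.
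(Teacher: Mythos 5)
Your proof is correct, and most of it coincides with the paper's argument: the reducedness of $A/pA$ from injectivity of Frobenius, the lifting of endomorphisms across $\bF_p \leftarrow \bZ_p$ via \'etaleness, and the resulting uniqueness of the Frobenius lift are all there (the paper cites the equivalence of categories between finite \'etale $\bZ_p$-algebras and finite \'etale $\bF_p$-algebras, which is exactly your iterated infinitesimal lifting plus $p$-adic completeness). Where you genuinely diverge is the descent from an automorphism of $A\otimes\bZ_p$ to one of $A$ --- the step you rightly identify as the crux. The paper's route is shorter: the Frobenius of the finite ring $A/pA$ has some finite order $n$, so its unique lift satisfies $\psi_p^n=\id$ on $A\otimes\bZ_p$, hence on $A$ (which embeds in $A\otimes\bZ_p$ since $A$ is torsion-free); thus $\psi_p^{n-1}$ is an inverse that visibly preserves $A$, and no global or generic-fibre input is needed at all. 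Your determinant computation --- $\psi_p\otimes\bar\bQ$ permutes the primitive idempotents of $K\otimes\bar\bQ$, so $\det(\psi_p)=\pm1$ in any $\bZ$-basis of $A$, whence the integer matrix of $\psi_p$ is invertible over $\bZ$ --- is a valid alternative and correctly handles the primes $\ell\neq p$ that the local argument at $p$ cannot see. It is the natural way to globalize when the lifted automorphism might have infinite order; here the finite-order observation makes it unnecessary, but your version is more robust and would survive in settings where $A/pA$ is not finite.
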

\begin{proof}
If $\psi_p$ is an automorphism, then the Frobenius endomorphism $x\mapsto x^p$ of
$A/pA$ is an automorphism, and so $p$ is unramified.

Suppose instead that $p$ is unramified. Then $A/pA$ is a finite product of finite
fields, and so
the Frobenius endomorphism of $A/pA$ is an automorphism of finite order.
The category of finite \'etale $\bZ_p$-algebras is equivalent to the category of 
finite \'etale $\bF_p$-algebras, by way of the functor $\bF_p\tn_{\bZ_p}\vbl$.
(See~\cite[IV (18.3.3)]{EGA-no.32}, say).
Thus the endomorphism 
$1\tn\psi_p$ of $\bZ_p\tn A$ is the unique Frobenius lift and it is an
automorphism of finite order.
It follows that $\psi_p\:A\to A$ is the unique Frobenius lift to $A$, and is
also an automorphism.
\end{proof}

\begin{prop}
  \lb{prop-cft} 
  There is a positive integer $c$, divisible only by
  primes that ramify in $A$, such that the action
  of $G_\bQ$ on $S$ factors through the cyclotomic character
  $G_{\bQ}\to\bg{c}$. If $p$ is unramified in $A$, then 
  $p\in\zpos $ and $(p\bmod c)\in\bg{c}$ act in the same way on $S$.
\end{prop}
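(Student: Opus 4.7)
The plan is to reduce to the Kronecker--Weber theorem by showing that for each prime $p$ unramified in $A$ the endomorphism $\psi_p$ acts on $S$ in the same way as any Frobenius element of $G_\bQ$ at $p$. This will force the image of $G_\bQ$ in $\Map(S,S)$ to be abelian, and a standard Chebotarev/Kronecker--Weber argument then produces the required $c$ and identifies $(p\bmod c)$ with $\psi_p$.

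For the key step, I would identify $S$ with $\bar S:=\Hom(A,\bar\bZ)$ by restriction (the image of $A$ is always integral), and then fix $p$ unramified in $A$, a prime $\bar\mathfrak P$ of $\bar\bZ$ above $p$, and the corresponding Frobenius $\Fr\in G_\bQ$. For any $\bar s\in\bar S$, the two ring maps $\bar s\circ\psi_p$ and $\Fr\circ\bar s$ from $A$ to $\bar\bZ$ both reduce modulo $\bar\mathfrak P$ to $a\mapsto\bar s(a)^p$: the first because $\psi_p$ is a Frobenius lift on $A$, the second because $\Fr$ acts as the $p$-th power on the residue field $\bar\bZ/\bar\mathfrak P$. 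The argument used in Proposition~\ref{prop-frob-auto}---that a ring map from a finite \'etale $\bZ_p$-algebra to $\bar\bZ_p$ is determined by its reduction mod $p$---then forces $\bar s\circ\psi_p=\Fr\circ\bar s$. Since $\bar\mathfrak P$ was arbitrary, it follows that $\psi_p$ and \emph{every} Frobenius at $p$ in $G_\bQ$ induce the same permutation of $\bar S$.

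Let $H$ be the image of $G_\bQ$ in $\Map(\bar S,\bar S)$, and let $L$ be the corresponding finite Galois extension of $\bQ$; concretely $L$ is the Galois closure of $K$ in $\bar\bQ$, and $H=\Gal(L/\bQ)$. The previous step says that for every prime $p$ unramified in $A$ (and hence in $L$) the Frobenius conjugacy class in $H$ at $p$ is a singleton. By the Chebotarev density theorem every conjugacy class of $H$ is the Frobenius class of some prime unramified in $A$, so every conjugacy class of $H$ is a singleton and $H$ is abelian. The Kronecker--Weber theorem then places $L$ inside some $\bQ(\mu_c)$, and taking $c$ to be the conductor of $L$ ensures $c$ is divisible only by primes ramifying in $L$, and hence only by primes ramifying in $A$. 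The factorisation of the $G_\bQ$-action through $\Gal(\bQ(\mu_c)/\bQ)=\bg c$ is then immediate, as is the final assertion: $(p\bmod c)$ acts on $\bQ(\mu_c)$, and therefore on $S$, as $\Fr_p$, which by the key step agrees with the action of $\psi_p$.

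The main obstacle is the identification $\bar s\circ\psi_p=\Fr\circ\bar s$, i.e.\ upgrading the mod-$p$ congruence defining ``Frobenius lift'' to an equality of honest integral ring maps. This is precisely where the assumption that $p$ is unramified in $A$ enters, via \'etaleness of $\bZ_p\tn A$ over $\bZ_p$. Once this step is in place, the combination of Chebotarev and Kronecker--Weber is routine.
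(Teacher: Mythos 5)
Your proof is correct and follows essentially the same route as the paper's: the same key computation that $s\circ\psi_p$ and a Frobenius at $p$ agree after reduction modulo a prime above $p$ (with \'etaleness of $\bZ_p\tn A$ upgrading the congruence to equality), followed by Chebotarev to get abelianness of the Galois image and Kronecker--Weber to produce $c$. The only difference is cosmetic: you fix $p$ and show its whole Frobenius class acts as $\psi_p$, whereas the paper fixes $g\in\Gal(N/\bQ)$ and uses Chebotarev to find a suitable $p$ realizing it; both yield the same conclusion.
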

\begin{proof}

Define the number field $N$ to be the invariant field of the kernel of the map
$G_\bQ \to \Map(S,S)$. Write $\bar{G}=\Gal(N/\bQ)$ and let
${\mc O}_N$ be the ring of integers of $N$.

Take any element $g\in\bar{G}$. 
By Chebotarev's theorem~\cite[V.6]{Neukirch:CFT} there is an
unramified prime $\p$ of $N$ lying over a prime number $p$ such that
$g(x)\equiv x^p \bmod \p$ for all $x\in {\mc O}_N$, i.e.,
$g$ is the Frobenius element of $\p$ in the extension $\bQ\subset N$.
Since Chebotarev's theorem provides infinitely many such $\p$,
we may also assume that $A$ is unramified at $p$.

We now claim that for all $s \in S=\Hom(A,{\mc O}_N)$ the maps
$s\circ \psi_p$ and $g\circ s$ from $A$ to ${\mc O}_N$ are equal.
Since $A$ is unramified at $p$, the map
$\Hom(A,{\mc O}_N)\to \Hom(A,{\mc O}_N/\p)$ is injective, so
it suffices to show that their
compositions with the map ${\mc O}_N \to {\mc O}_N/\p$ are equal.
But this follows from \ref{prop-frob-auto} and our choice of~$\p$.
Thus, $g\in \bar{G}$ and $p\in \zpos$ act in the same way on $S$.

It follows that the image of $G_\bQ$ in $\Map(S,S)$ is contained in the image
of $\zpos$, so $\bar{G}$ is abelian.  By the Kronecker-Weber
theorem~\cite[III.3.8]{Neukirch:CFT}, $N$ is contained in a cyclotomic field
$\bQ(\mu_c)$, where $c$ is divisible only by primes that ramify
in $N$. Since $N$ is the
common Galois closure of the components of $A\otimes\bQ$, such primes are
ramified in $A$ as well. The last statement follows from the fact that for any
prime number $p\nmid c$ the element $(p\bmod c)\in\bg{c}$ corresponds to the
Frobenius element of any prime over $p$ in the extension $\bQ\subset\bQ(\mu_c)$.
\end{proof}

It follows that our map of topological monoids $G_\bQ\times\zpos \to \Map(S,S)$
factors through $\zhat^*\times\zpos$. We will show that it
factors further through $\zhat^{\circ}$ with the following criterion.

\begin{prop}
\label{crit}
A continuous action of $\zhat^*\times\zpos$ on a finite discrete set
$T$ factors through a continuous action of $\zhat^{\circ}$ if and only if
\renewcommand{\theenumi}{\roman{enumi}}
\begin{enumerate}
\item all but finitely many primes $p\in\zpos$ act as automorphisms on $T$, and
\item for all $d\in\zpos$ there exists an integer $c_d$ such that 
			the action of $\zhat^*$ on $dT$ factors through $\bg{c_d}$ and
			for each $n\in\zpos$ with $ndT=dT$ we have 
		\begin{enumerate}
			\item[---] $n$ is relatively prime to $c_d$, and
			\item[---] the elements $(n\bmod c_d)\in\bg{c_d}$ and $n\in\zpos$
				act on $dT$ in the same way.
		\end{enumerate}
\end{enumerate}
\end{prop}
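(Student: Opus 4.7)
The plan is to handle necessity (``only if'') and sufficiency (``if'') separately, exploiting the continuity hypotheses in both directions.

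For necessity, the first step is the observation that $\hat{\bZ}^\circ = \varprojlim (\bZ/m\bZ)^\circ$ as topological monoids, so a continuous monoid map into the finite discrete $\Map(T,T)$ must factor through $(\bZ/m\bZ)^\circ$ for some $m$. Condition (i) is then immediate, since any prime $p \nmid m$ becomes a unit in $(\bZ/m\bZ)^*$ and so must act by a permutation on $T$. For (ii), I would take $c_d$ to be the smallest integer such that the full action on $dT$ of the clopen submonoid $\hat{\bZ}^\circ_d := \{x \in \hat{\bZ}^\circ : xdT = dT\}$ factors through $\bg{c_d}$; this is well-defined because the image of $\hat{\bZ}^\circ_d$ in $\mathrm{Perm}(dT)$ is a submonoid of a finite group, hence a subgroup. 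Since units $u \in \hat{\bZ}^*$ preserve $dT$ (because $ud$ and $d$ generate the same principal ``ideal'' in $\hat{\bZ}^\circ$), the first clause of (ii) holds. For the second clause, the key is the following complementary pair: a prime $p$ divides $c_d$ only if the $\bZ_p$-dependence of the action on $dT$ is nontrivial, and this forces $pdT \neq dT$; conversely, any prime divisor $p$ of an $n$ with $ndT=dT$ satisfies $pdT=dT$. Both assertions follow from the same argument: given $x \in \hat{\bZ}^\circ_d$ with $v_p(x) \geq 1$, write $x = py$ and note that $xdT = dT \subseteq pT$ combined with the automatic inclusion $pdT \subseteq dT$ forces $pdT = dT$. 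Together these give $\gcd(n,c_d)=1$, and the claim that $n$ and $n \bmod c_d$ act identically on $dT$ is then automatic from the factoring through $\bg{c_d}$.

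For sufficiency, I would extend the given action by continuity from the dense submonoid $\hat{\bZ}^* \cdot \bN' \subseteq \hat{\bZ}^\circ$. The main technical step is: for each $t \in T$, the map $\bN' \to T$, $n \mapsto nt$, extends continuously to a map $\hat{\bZ}^\circ \to T$; equivalently, there exists $M_t$ such that $nt$ depends only on $n \bmod M_t$. Given $n_0 \in \bN'$, I would set $t_0 := n_0 t$ and $d := n_0$, so that $t_0 \in dT$; for $n$ congruent to $n_0$ modulo a suitable multiple of $c_d$, one factors $n$ multiplicatively relative to $n_0$, uses (i) to invert the good primes appearing, and uses (ii) to conclude $nt = n_0 t$. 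The resulting extension $\phi : \hat{\bZ}^\circ \to \Map(T,T)$ is a monoid homomorphism by density of $\bN'$ in $\hat{\bZ}^\circ$ and continuity of multiplication in $\hat{\bZ}^\circ$.

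The main obstacle is in the sufficiency direction. A single point $t$ may lie in many different strata $dT$, and one must reconcile the moduli $c_d$ from (ii) across these strata to produce a single $M_t$. In addition, the bad primes of (i) must be handled by enlarging $d$ by a sufficiently high power of them so that $dT$ lies in the ``stable'' region of $T$ where every prime acts invertibly, after which (ii) becomes applicable. Finally, there is a mild tension between the additive topology on $\hat{\bZ}^\circ$ (used for convergence, $n \equiv n' \bmod M$) and the multiplicative monoid structure (through which the action factors); converting an additive congruence into a multiplicative factorization that (ii) can chew on is precisely the ``elementary but slightly intricate'' argument the authors flag in the introduction.
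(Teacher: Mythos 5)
Your overall architecture (reduce to a finite quotient $\bm{m}$ for necessity; extend by continuity for sufficiency) matches the paper's, but in both directions the step that carries the real content is the one you have not supplied. In the necessity direction, the claim you need is: if $pdT=dT$ then the action on $dT$ is insensitive to the $p$-adic component, so that $p\nmid c_d$. The argument you actually give (write $x=py$ and deduce $pdT=dT$ from $xdT=dT\subseteq pT$ together with $pdT\subseteq dT$) proves only the \emph{other} half of your ``complementary pair,'' namely that any prime dividing some $n$ with $ndT=dT$ itself satisfies $pdT=dT$. The forward implication does not ``follow from the same argument.'' The paper's device for it is: writing $m=p^ne$ with $p\nmid e$, for $x\equiv y\bmod e$ one has $p^nx\equiv p^ny\bmod m$, hence $p^n(xs)=p^n(ys)$ for all $s\in T$; since $xs,ys\in dT$ whenever $s\in dT$, and $p$ acts bijectively on $dT$, one cancels $p^n$ to get $xs=ys$. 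Without this cancellation trick your assertion that a prime dividing $c_d$ must satisfy $pdT\neq dT$ is unsupported, and it is precisely the point of condition (ii).

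In the sufficiency direction you candidly flag the main difficulty --- reconciling the moduli $c_d$ across strata and converting an additive congruence into something the multiplicative hypothesis (ii) can use --- but you do not resolve it, and the resolution you sketch (``factor $n$ multiplicatively relative to $n_0$'') does not work as stated: a congruence $n\equiv n_0 \bmod M$ gives no multiplicative factorization of $n$ through $n_0$. The paper's resolution is different: set $r_0=\prod_p p^{a_p}$ where $a_p$ is minimal with $p^{a_p}T=p^{a_p+1}T$ (so $nT=\gcd(n,r_0)T$ for all $n\in\zpos$), choose $r$ divisible by $dc_d$ for every $d\mid r_0$, and, given $a_1d_1\equiv a_2d_2\bmod r$ with $a_i\in\zhat^*$, $d_i\in\zpos$, extract $d=\gcd(d_i,r_0)$ (the same for $i=1,2$ because the $a_i$ are units), write $d_i=dd_i'$, and divide the congruence by $d$ to obtain $a_1d_1'\equiv a_2d_2'\bmod c_d$; hypothesis (ii) then identifies the actions of $(a_i,d_i')$ on $dT$, and one composes with the action of $(1,d)$. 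So your proposal is an accurate map of where the work lies, but the two essential arguments --- the $p^n$-cancellation and the divide-by-$d$ reduction --- are both missing, and they are the proof.
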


%
\begin{proof}
To show the necessity of (i) and (ii), assume that the action of 
$\zhat^*\times\zpos$
factors through $\bm{r}$ for some integer $r>0$.  
Then all primes not dividing $r$, when viewed as elements of
$\zpos$, act as automorphisms on $T$. 
This establishes (i).  
To show (ii), take any $d\in\zpos$ and let $c_d$
be the smallest positive integer $c$ for which 
the $\zhat^*$-action on $dT$ factors through $\bg{c}$.
Note that $c_d$ divides all $c$ with this property.
Now suppose that $p$ is a prime with $pdT=dT$.
Write $r=p^ne$, with $p\nmid e$.
Then for any $x,y\in\bm{r}$ with $x\equiv y \bmod e$ and any $s\in dT$
(in fact any $s\in T$), we have
\[
p^n\big(xs\big) = (p^nx)s = (p^ny)s = p^n(ys). 
\]
Since $p$ acts bijectively on $dT$, this implies that $x$ and $y$ act in the
same way on $dT$, so the action of $\zhat^{\circ}$ on $dT$ factors through
$\bm{e}$. In particular, $c_d\mid e$, so $p\nmid c_d$, and the elements
$p\in \zpos$ and $(p\bmod e)\in\bg{e}$ and $(p\bmod c_d)\in\bg{c_d}$ all
act in the same way on $dT$. Since $\zpos$ is generated by the primes,
part (ii) follows.

For the converse, suppose (i) and (ii) hold.  
For every prime number $p$, let $a_p$ be the smallest integer $a\ge 0$
such that $p^aT = p^{a+1}T$. By (i) we have $a_p=0$ for all but finitely many $p$,
so $r_0=\prod_p p^{a_p}$ is an integer. 
Note that for any $n\in\zpos$ we have $nT=\gcd(n,r_0)T$.

Now let $r$ be any integer divisible by $d c_d$ for every $d\dvd r_0$.
We will show that the action of $\zhat^*\times \zpos$ on $T$ factors through 
$\bm{r}$.  
To do this, we will show directly that
any two elements $(a_1,d_1),(a_2,d_2)\in\zhat^*\times\zpos$ satisfying
$a_1d_1 \equiv a_2d_2 \bmod r$ act in the same way on $T$.

Since $r_0\mid r$ the congruence implies that $d_1$ and
$d_2$ have the same greatest common divisor
$d$ with $r_0$, so we have $d_1T=dT=d_2T$.
For $i=1, 2$ we define $d'_i\in \zpos$ by $d_i=dd'_i$ and deduce that
$d'_i(dT)=dT$.  Using (ii) one sees that $d'_i$ is coprime to $c_d$, and
that the action of $d'_i$ on $dT$ is that of $(d'_i\bmod c_d)\in\bg{c_d}$.
By the defining property of $r$ we have $dc_d\mid r$, so $a_1d'_1d \equiv
a_2d'_2d \bmod d c_d$, which implies $a_1d'_1\equiv a_2d'_2 \bmod c_d$.
It follows that $(a_1,d'_1)$ and $(a_2,d'_2)$ are mapped to the same element
of $\bm{c_d}$, which in fact lies in $\bg{c_d}$.
Thus, $(a_1,d'_1)$ and $(a_2,d'_2)$ act identically on $dT$, and composing with
$(1,d)$ we see that $(a_1,d_1)$ and $(a_2,d_2)$ act in the same way on $T$.
\end{proof}

In order to finish the proof of theorem \ref{thm-A} one checks the conditions
of \ref{crit} for $T=S$.  Condition (i) follows from \ref{prop-frob-auto} and the
fact that $A$ is ramified at only finitely many primes. For condition (ii), suppose
$d\in \zpos$ is given and consider the sub-$\Lambda$-ring $\psi_d(A)$ of
$A$ which corresponds to the $G_\bQ\times\zpos$-set $dS$ of $S$.  Proposition
\ref{prop-cft} applied to $\psi_d(A)$ now provides an integer $c_d$ so that the
$G_\bQ$-action on $dS$ factors through $\bg{c_d}$.  Any $n\in\zpos$ with
$ndS=dS$ is a product of primes that are unramified in $\psi_d(A)$ by
\ref{prop-frob-auto}, and so \ref{prop-cft} tells us that
$(n\mod c_d)\in\bg{c_d}$ and that this element acts on $dS$ in the same way 
as $n$. This gives condition (ii).

\section{Explicit maximal $\Lambda$-orders}

Given a prime number $p$, there is a notion of $\Lambda_p$-action on a
ring $R$, and as before, this has a simple description if $R$ has no
$p$-torsion: a ring endomorphism $\psi_p$ of $R$ that lifts the
Frobenius endomorphism, that is, such that $\psi_p(x)-x^p\in pR$ for
all $x\in R$.  Also as before, a sub-$\Lambda_p$-ring $A$ of a $\bQ_p\Lambda_p$-ring
$K$ is called
a $\Lambda_p$-order if it is finite over $\bZ_p$. It is said to be
maximal if it contains every other $\Lambda_p$-order in~$K$.

We have two natural ways of making $\Lambda_p$-orders. First,
for any abelian group $V$ the group ring $\bZ_p[V]$ is a
$\Lambda_p$-ring when we set $\psi_p(r)=r$ for $r\in \bZ_p$ and
$\psi_p(v)=v^p$ for $v\in V$.
Secondly, if $A$ is  the ring of integers of a finite unramified extension $K$
of $\bQ_p$, then $A$ has a unique $\Lambda_p$-ring structure, where
$\psi_p$ is the Frobenius map (cf. \ref{prop-frob-auto}). By extending
$\psi_p$ to $K$ we see that $A$ is the maximal $\Lambda_p$-order of $K$.
Taking tensor products of these two building blocks we see that for
any integer $q$ the ring $A[\mu_q]=A[z]/(z^q-1)$ is a
$\Lambda_p$-order in $K[\mu_q]$.

\begin{lemma}
\lb{lemma-local}
If $q$ is a power of $p$, then $A[\mu_q]$ is the maximal
$\Lambda_p$-order of $K[\mu_q]$.
\end{lemma}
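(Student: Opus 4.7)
The plan is to argue by induction on $n$, where $q = p^n$. The base case $n = 0$ is the statement already recorded above, that $A$ is the maximal $\Lambda_p$-order of $K$.

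For the inductive step, let $B$ be a $\Lambda_p$-order containing $A[\mu_q]$; we must show $B = A[\mu_q]$. Write $K[\mu_q] = \prod_{k=0}^n L_k$ with $L_k = K(\zeta_{p^k})$, and let $\sO_k = \sO_{L_k}$, so that $A[\mu_q] \subseteq B \subseteq \prod_k \sO_k$, the maximal order of $K[\mu_q]$. Since $B \otimes_{\bZ_p}\bQ_p = K[\mu_q]$, the Frobenius lift on $B$ is the restriction of the unique ring endomorphism $\psi_p$ of $K[\mu_q]$ with $\psi_p|_K = \Fr_K$ and $\psi_p(z) = z^p$. The key observation is that $\psi_p$ annihilates the top factor $L_n$: this follows because $\psi_p(e_n) = e_n(z^p)$, where $e_n$ is the idempotent polynomial in $z$ vanishing at $\zeta_{p^j}$ for $j < n$ and equal to $1$ at $\zeta_{p^n}$, and $\zeta_{p^j}^p$ (which is either $1$ or a primitive $p^{j-1}$-th root of unity) is never $\zeta_{p^n}$ for $j \leq n$.

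Step one is to show $B \cap L_n = A[\mu_q] \cap L_n$. A direct computation identifies $A[\mu_q] \cap L_n$ with the ideal of $A[\mu_q]$ generated by $z^{p^{n-1}} - 1$, whose image in $\sO_n$ via the projection to the top factor is $(\zeta_p - 1)\sO_n = \pi_n^{p^{n-1}}\sO_n$, where $\pi_n = \zeta_{p^n} - 1$ is a uniformizer of $\sO_n$. The intersection $J = B \cap L_n$ is then an ideal of $\sO_n$ sandwiched between $\pi_n^{p^{n-1}}\sO_n$ and $\sO_n$, hence of the form $\pi_n^i \sO_n$ for some $0 \leq i \leq p^{n-1}$. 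For any $b \in J$, since $\psi_p(b) = 0$, the Frobenius lift condition $\psi_p(b) - b^p \in pB$ forces $b^p \in pB$; and since $b^p \in L_n$, the element $b^p/p$ then lies in $B \cap L_n = J$. Applied to a generator $b = \pi_n^i$, this gives $v_n(\pi_n^{pi}/p) = pi - p^{n-1}(p-1) \geq i$, i.e.\ $i \geq p^{n-1}$, and therefore $J = A[\mu_q] \cap L_n$.

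Step two uses the inductive hypothesis. The projection $\pi\colon K[\mu_q] \to K[\mu_q]/L_n \cong K[\mu_{p^{n-1}}]$ commutes with $\psi_p$ (since $\psi_p(L_n) = 0 \subseteq L_n$), so it is a $\Lambda_p$-ring map. Hence $\pi(B)$ is a $\Lambda_p$-order of $K[\mu_{p^{n-1}}]$ containing $\pi(A[\mu_q]) = A[\mu_{p^{n-1}}]$, and by induction $\pi(B) = A[\mu_{p^{n-1}}]$. Combined with step one, the map induced by $A[\mu_q] \subseteq B$ between the short exact sequences $0 \to A[\mu_q] \cap L_n \to A[\mu_q] \to A[\mu_{p^{n-1}}] \to 0$ and $0 \to B \cap L_n \to B \to \pi(B) \to 0$ has equalities on the outer terms, whence $B = A[\mu_q]$ by the five-lemma.

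The main obstacle is step one: one must combine the ramification in the cyclotomic extension $L_n/K$ with the Frobenius lift condition on $b^p/p$ to force the numerical inequality $(p-1)i \geq p^{n-1}(p-1)$, which pins $J$ to its minimal possible value. The remaining ingredients---computing $A[\mu_q] \cap L_n$, verifying that $\pi$ is compatible with $\psi_p$, and the diagram chase---are formal.
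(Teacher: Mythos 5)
Your proof is correct, and it reorganizes the argument in a way that genuinely differs from the paper's at the decisive step. Both arguments induct on $n$, peel off the top factor $L_n=K(\zeta_{p^n})$, invoke the inductive hypothesis on the image in $K[\mu_{p^{n-1}}]$, and extract the crucial inequality from the congruence $\psi_p(x)-x^p\in pB$ together with the total ramification of $L_n/K$. The paper, however, never isolates the kernel $B\cap L_n$: it exhibits $A[\mu_{p^n}]$ as the fiber product $A[\zeta_{p^n}]\times_{k[y]/(y^{p^{n-1}}-1)}A[\mu_{p^{n-1}}]$, places $B$ inside $A[\zeta_{p^n}]\times A[\mu_{p^{n-1}}]$ by integrality plus induction, and then proves $v(b-f(\zeta))\geq 1/(p-1)$ for \emph{every} element $(b,f(y))$ by a bootstrapping argument: if $a$ is the best uniform lower bound on $v(b-f(\zeta))$, the Frobenius congruence upgrades it to $(1+a)/p$, which forces $a=1/(p-1)$. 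You instead observe that $\psi_p$ annihilates $L_n$, so that on the kernel the congruence degenerates to $b^p\in pB$; since the kernel is an ideal $\pi_n^i\sO_n$ of a discrete valuation ring, a single valuation comparison $pi-p^{n-1}(p-1)\geq i$ pins down $i=p^{n-1}$, and you reassemble $B=A[\mu_q]$ from kernel and image by the five lemma rather than from the fiber-product description. Your step one is in effect the paper's estimate specialized to elements of the form $(b,0)$, with principality of the ideal replacing the bootstrap; the packaging (the observation $\psi_p(e_n)=0$, the kernel/image split) is your own and is arguably cleaner, at the cost of needing $B\supseteq A[\mu_q]$ from the outset, which the paper's version does not.

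That last point is the one thing to patch: maximality means $A[\mu_q]$ contains \emph{every} $\Lambda_p$-order, whereas you only treat orders $B$ containing $A[\mu_q]$, and your step one really uses this hypothesis (both to make $B\cap L_n$ an $\sO_n$-ideal and to get the upper bound $i\leq p^{n-1}$). The reduction is harmless but should be stated: the subring generated by an arbitrary $\Lambda_p$-order and $A[\mu_q]$ is again a $\Lambda_p$-order, since it is $\psi_p$-stable, the congruence $\psi_p(x)-x^p\in pR$ is preserved under sums and products, and it lies in the integral closure of $\bZ_p$ in $K[\mu_q]$ and hence is finite over $\bZ_p$.
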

\begin{proof}
By induction, it is enough to assume $A[\mu_q]$ is maximal and then
prove $A[\mu_{pq}]$ is.

Let $k$ denote the residue field of $A$, and
let $\zeta$ denote a primitive $pq$-th root of unity in some extension
of $K$.  Then we have $K[z]/(z^{pq}-1) = K(\zeta) \times
K[y]/(y^q-1)$, the element $z$ corresponding to $(\zeta,y)$.  In
these terms, the $\Lambda_p$-action is given by $\psi_p(b,f(y)) =
(f^*(\zeta^p), f^*(y^p))$, where $f^*(y)$ denotes polynomial 
obtained by applying the Frobenius map $\psi_p$ coefficient-wise to $f(y)$.

%
%

Now consider the following diagram of rings:
\[
\xymatrix{
A[z]/(z^{pq}-1) \ar^{z\mapsto y}@{>>}[r]\ar^{z\mapsto\zeta}@{>>}[d] 
  & A[y]/(y^q-1)\ar^{y\mapsto y}@{>>}[d] \\
A[\zeta] \ar^-{\zeta\mapsto y}@{>>}[r]& k[y]/(y^q-1),
}
\]
where $A[\zeta]$ denotes the ring of integers in $K(\zeta)$.
As noted for example in Kervaire--Murthy~\cite{Kervaire-Murthy}, this is a pull-back 
diagram.  This is just an instance of the easy fact that
for ideals $I$ and $J$ in any ring $R$, we have
	\[
	R/(I\cap J) = R/I \times_{R/(I+J)} R/J.
	\]
In our case, take $R=A[z]$, $I=(z^q-1)$, and $J=(1+z^q+\cdots+z^{q(p-1)})$.

Now suppose $R$ is a $\Lambda_p$-order in $K[\mu_{pq}]$.  Then the image 
of $R$ in $K[y]/(y^q-1)$
is contained, by induction, in $A[y]/(y^q-1)$.  Therefore $R$ is
contained in $A[\zeta]\times A[y]/(y^q-1)$; we will view elements of
$R$ as elements of this product without further comment.  
Because the diagram above is a pull-back diagram, we need only show that the two maps $R\rightrightarrows k[y]/(y^q-1)$ given by mapping to the two factors and then projecting to $k[y]/(y^q-1)$ agree.


  Let $v$ denote the valuation on $A[\zeta]$ normalized so that $v(p)=1$.
  Let $a\leq 1/(p-1)$ be the largest number such that 
  \begin{equation}
  \label{aequ}
  \text{for all } (b,f(y))\in R \text{ we have } v(b-f(\zeta))\geq a.
  \end{equation}
  Note that the expression $f(\zeta)$ makes sense only modulo
  $\zeta_p-1$, and so the condition above is meaningless if
  $a>v(\zeta_p-1)=1/(p-1)$. 

 Let $(b,f(y))$ be an element of $R$.
 Then because $R$ is
 a $\Lambda_p$-ring, there is an element $(c,g(y))\in pR$ such that
 \[
 (c,g(y)) = (b,f(y))^p - \psi_p(b,f(y)) = (b^p-f^*(\zeta^p), f(y)^p-f^*(y^p)).
 \]
 On the other hand, because of our assumption on $a$, we have
 \[
 1+a \leq v(c-g(\zeta)) = 
	v\left((b^p-f^*(\zeta^p))-(f(\zeta)^p-f^*(\zeta^p)\right) =
	v(b^p-f(\zeta)^p). 
 \]
 But because the integral polynomial $p(X-Y)$ divides
 $(X-Y)^p-(X^p-Y^p)$, we have 
 \[
 v((b-f(\zeta))^p-(b^p-f(\zeta)^p)) \geq 1+a
 \]
 and hence $v(b-f(\zeta))\geq (1+a)/p.$  That is, $(1+a)/p$
 satisfies~(\ref{aequ}). 
 
 But then $a=1/(p-1)$ because otherwise this would violate the
 maximality of $a$.  
 In other words, for any element $(b,f(y))\in R$,
 the element $\zeta_p-1$ divides $b-f(\zeta)$.  
 This is just another way
 of saying $b$ and $f(y)$ have the same image in
 $A[\zeta]/(\zeta_p-1)=k[y]/(y^q-1)$, and hence the element $(b,f(y))$
 lies in the fiber product, which we showed above is
 $A[\mu_{pq}]$.
\end{proof}

\subsection{}
{\em Remark.}
It is not true that $\bZ_p[V]$ is maximal for every finite abelian
group $V$.  For example, if $V=\bZ/p\bZ \times \bZ/p\bZ$ and
$x=\frac{1}{p}\sum_{\sigma\in V}\sigma$, then $\psi_p(x)=p$ and
$x^2=px$.  Therefore, the subring $\bZ_p[V][x]$ of $\bQ_p[V]$ is a
sub-$\Lambda_p$-ring which is finite over $\bZ_p$.  The global analogue also 
holds: $\bZ[V]$ is not the maximal integral $\Lambda$-model for $\bQ[V]$.  This 
follows from the local statement.

\begin{prop}
\lb{prop-local-to-global}
Let $K$ be a finite \'etale $\bQ\Lambda$-ring, and let
$R$ be a $\Lambda$-order.  If $\bZ_p\tn R$ is
a maximal $\Lambda_p$-order in $\bQ_p\tn R$ for all primes $p$, then
$R$ is maximal.
\end{prop}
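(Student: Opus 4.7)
The plan is to let $R'$ denote the maximal $\Lambda$-order of $K$, which exists by the first proposition of Section~1, and then to show that the two orders agree by checking equality at every prime. Since $R \sub R'$, it suffices to prove that $\bZ_p\tn R = \bZ_p\tn R'$ for each prime $p$, because then the finite abelian group $R'/R$ vanishes locally everywhere and hence vanishes.

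To carry this out at a fixed prime $p$, I would first observe that $\bZ_p\tn R'$ is finite over $\bZ_p$ (since $R'$ is finite over $\bZ$), is torsion-free (so embeds into $\bQ_p\tn R' = \bQ_p\tn K$), and inherits a $\Lambda_p$-structure from the $\Lambda$-structure on $R'$ by tensoring $\psi_p$ along $\bZ\to\bZ_p$. Thus $\bZ_p\tn R'$ is a $\Lambda_p$-order in $\bQ_p\tn K$. The same reasoning shows $\bZ_p\tn R$ is a $\Lambda_p$-order in $\bQ_p\tn K$, and the inclusion $R\sub R'$ yields $\bZ_p\tn R\sub\bZ_p\tn R'$. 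The hypothesis tells us that $\bZ_p\tn R$ is the maximal $\Lambda_p$-order in $\bQ_p\tn K$, so the reverse inclusion $\bZ_p\tn R' \sub \bZ_p\tn R$ holds, giving equality.

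Having established $\bZ_p\tn R = \bZ_p\tn R'$ for every $p$, one concludes $R=R'$ by the standard local-global principle for finitely generated $\bZ$-modules: the inclusion $R\sub R'$ of torsion-free $\bZ$-modules of equal rank becomes an equality after tensoring with $\bZ_p$ at every prime, so its cokernel (a finite abelian group) has trivial $p$-primary part for every $p$, hence is zero. There is no real obstacle here; the only point that requires a moment's care is the verification that tensoring with $\bZ_p$ respects the category of $\Lambda$-orders, and this is immediate from the torsion-free description of $\Lambda$-actions via commuting Frobenius lifts recalled in the introduction.
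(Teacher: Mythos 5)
Your proof is correct and follows essentially the same route as the paper's: compare $R$ with the maximal $\Lambda$-order $S$, use the hypothesis to get $\bZ_p\tn R = \bZ_p\tn S$ at every prime $p$, and conclude via the local-global principle that the index is $1$. The only discrepancy is that the paper takes $S$ to be the maximal $\Lambda$-order of $\bQ\tn R$ rather than of $K$ --- since $\Lambda$-orders are not required to have full rank these can differ, and the hypothesis (which concerns $\bQ_p\tn R$, not $\bQ_p\tn K$) only controls $\Lambda_p$-orders inside $\bQ_p\tn R$ --- so your argument is airtight once $K$ is replaced by $\bQ\tn R$ throughout.
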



\begin{proof}
Let $S$ be the maximal $\Lambda$-order in $\bQ\tn R$.  Because
$\bZ_p\tn R$ is the maximal $\Lambda_p$-order in $\bQ_p\tn R$, the inclusion
$\bZ_p\tn R \subseteq \bZ_p\tn S$ is an equality.  Therefore $R$ and $S$ have the same rank,
and $p$ does not divide the index of $R$ in $S$.
\end{proof}

\begin{thm}
Let $r\geq 1$ be an integer.  Then $\bZ[\mu_r]$ is the maximal
$\Lambda$-order in $\bQ[\mu_r]$.
\end{thm}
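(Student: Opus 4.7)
The plan is to use Proposition~\ref{prop-local-to-global} to reduce to a local statement, decompose the local ring into $\Lambda_p$-stable factors, and then invoke Lemma~\ref{lemma-local} on each factor. Concretely, Proposition~\ref{prop-local-to-global} reduces the claim to showing that, for every prime $p$, the ring $\bZ_p[\mu_r]$ is the maximal $\Lambda_p$-order in $\bQ_p[\mu_r]$. Fix such a $p$ and write $r=p^am$ with $\gcd(m,p)=1$, so that $\bZ_p[\mu_r]=\bZ_p[\mu_m]\tn_{\bZ_p}\bZ_p[\mu_{p^a}]$.

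Since $m$ is coprime to $p$, the ring $\bZ_p[\mu_m]$ is \'etale over $\bZ_p$ and decomposes as a product $\prod_i A_i$, where each $A_i$ is the ring of integers of the unramified extension $K_i$ of $\bQ_p$ cut out by an irreducible factor of $x^m-1$ in $\bZ_p[x]$. The key observation I will verify is that $\psi_p$ stabilizes each primitive idempotent and hence each $A_i$: if $\alpha$ is a root of such an irreducible factor $g$, then $\alpha^p$ lies in the same $\Gal(\bar\bQ_p/\bQ_p)$-orbit as $\alpha$, because $\bQ_p(\alpha)/\bQ_p$ is unramified and the Frobenius acts on prime-to-$p$ roots of unity by $p$-th power; therefore $g(x)$ divides $g(x^p)$ and $\psi_p\:x\mapsto x^p$ preserves $A_i=\bZ_p[x]/(g(x))$. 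Tensoring with $\bZ_p[\mu_{p^a}]$ then yields a product decomposition $\bZ_p[\mu_r]=\prod_i A_i[\mu_{p^a}]$ as $\Lambda_p$-rings, with $\psi_p$ restricting on each factor to the Frobenius automorphism of $A_i/\bZ_p$ composed with $\zeta_{p^a}\mapsto\zeta_{p^a}^p$.

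Given any $\Lambda_p$-order $R\sub\bQ_p[\mu_r]=\prod_i K_i[\mu_{p^a}]$, each projection $\pi_i$ is a $\Lambda_p$-map, since the decomposition is $\psi_p$-stable, so $\pi_i(R)$ is a $\Lambda_p$-order in $K_i[\mu_{p^a}]$. Lemma~\ref{lemma-local} asserts that $A_i[\mu_{p^a}]$ is the maximal $\Lambda_p$-order there, hence $\pi_i(R)\sub A_i[\mu_{p^a}]$; the tautological inclusion $R\sub\prod_i\pi_i(R)$ then gives $R\sub\prod_i A_i[\mu_{p^a}]=\bZ_p[\mu_r]$. I expect the main obstacle to be the idempotent-stability observation in the second paragraph: if $\psi_p$ permuted the factors of $\bZ_p[\mu_m]$ non-trivially, the projections would cease to commute with $\psi_p$ and the clean reduction to Lemma~\ref{lemma-local} would collapse. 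Fortunately the cyclotomic Frobenius is exactly aligned with the Galois-orbit decomposition that produces those factors, so no genuinely new input is needed beyond the already-established local lemma.
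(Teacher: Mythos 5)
Your proof is correct and follows essentially the same route as the paper's: reduce to the local statement via Proposition~\ref{prop-local-to-global}, split off the prime-to-$p$ part of $r$ to write $\bZ_p[\mu_r]$ as a product of rings $A_i[\mu_{p^a}]$ with $A_i$ unramified, and apply Lemma~\ref{lemma-local} to each factor. The only difference is that you spell out two details the paper leaves implicit (the $\psi_p$-stability of the idempotent decomposition and why a product of maximal $\Lambda_p$-orders is maximal), both of which you handle correctly.
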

\begin{proof}
By \ref{prop-local-to-global}, it suffices to show that for every prime
$p$, the $\Lambda_p$-order $\bZ_p[\mu_r]$ maximal.
Write $r=qn$, where $q$ is the largest power of $p$ dividing $r$.
Then $\bZ_p[\mu_r] = \bZ_p[\mu_n][\mu_q] = \prod_A A[\mu_q]$, where
$A$ runs over the irreducible factors of $\bZ_p[\mu_n]$, all of which
are unramified extensions of $\bZ_p$.  By \ref{lemma-local}, 
each factor $A[\mu_q]$ is a maximal $\Lambda_p$-order, and therefore
so is their product.  
\end{proof}


\subsection{}
{\em Remark.}
Using \ref{cor-C} and \ref{prop-intersection}, we can also describe the maximal $\Lambda$-order in general as follows.
Let $S$ be a finite $\bm{r}$-set, 
let $\zeta_r\in\bar\bQ$ denote a primitive $r$-th root of unity,
and let
$K=\Hom_{\bg{r}}(S,\bQ(\zeta_r))$
denote the corresponding finite \'etale $\Lambda$-ring over $\bQ$.  
Consider the isomorphism
\[
\bQ[\mu_r] \to \prod_{d\dvd r} \bQ(\zeta_{r}^d)
\]
given by $z\mapsto (\dots,\zeta_r^d,\dots)_{d\dvd r}$.
Then the maximal
$\Lambda$-order in $K$ is the set of elements $f\in K$ such that for all
$s\in S$, the element 
\[
(\dots,f(ds),\dots)_{d\dvd r}\in \prod_{d\dvd r} \bQ(\zeta_r^d) \cong \bQ[\mu_r]
\]
lies in $\bZ[\mu_r]$.

\bibliography{references}
\bibliographystyle{plain}
\end{document}